\newtheorem{theorem}{Theorem}[section]
\newtheorem{lemma}[theorem]{Lemma}
\theoremstyle{definition}
\newtheorem{definition}[theorem]{Definition}
\theoremstyle{remark}
\newtheorem{remark}[theorem]{Remark}
\numberwithin{equation}{section}
\newcommand\nutwid{\overset {\text{\lower 3pt\hbox{$\sim$}}}\nu}
\newcommand{\SL}{\mbox{SL}}
\newcommand\omycite[1]{}
\newcommand{\beqs}{\begin{equation*}}
\newcommand{\eeqs}{\end{equation*}}
\newcommand{\beq}{\begin{equation}}
\newcommand{\eeq}{\end{equation}}
\renewcommand{\MR}[1]{\href{http://www.ams.org/mathscinet-getitem?mr={#1}}{MR{#1}}}
\begin{document}
\title[Congruences modulo powers of $3$ ]{Congruences modulo powers of $3$ for generalized Frobenius partitions $C\Psi_{6,0}$ }


\author{Dandan Chen}
\address{Department of Mathematics, Shanghai University, People's Republic of China}
\address{Newtouch Center for Mathematics of Shanghai University, Shanghai, People's Republic of China}
\email{mathcdd@shu.edu.cn}
\author{Siyu Yin*}
\address{Department of Mathematics, Shanghai University, People's Republic of China}
\email{siyuyin@shu.edu.cn, siyuyin0113@126.com}


\subjclass[2010]{ 11P83, 05A17}

\date{}


\keywords{Generalized Frobenius partitions; Atkin-Lehner involution; Congruences; Modular forms; Partitions}

\begin{abstract}
In 1984, Andrews introduced the family of partition functions \(c\phi_k(n)\), which counts the number of generalized Frobenius partitions of \(n\) with \(k\) colors. In previous work, we proved a conjecture on congruences for \(c\phi_6(n)\) modulo powers of 3. In this paper, we consider the \((6,0)\)-colored Frobenius partition functions \(c\psi_{6,0}(n)\). We establish a connection between the generating functions of \(c\psi_{6,3}(n)\) and \(c\psi_{6,0}(n)\) via an Atkin-Lehner involution, and prove congruences modulo powers of 3 for \(c\psi_{6,0}(n)\).
\end{abstract}

\maketitle


\section{Introduction}

In his 1984 AMS Memoir \cite{Andrews-1984}, Andrews introduced the concept of generalized Frobenius partitions of \(n\). Such a partition is represented by a two-row array
\[
\begin{pmatrix}
a_1 & a_2 & \cdots & a_r \\
b_1 & b_2 & \cdots & b_r
\end{pmatrix},
\]
where the \(a_i\) and \(b_i\) are non-negative integers satisfying
\[
n = r + \sum_{i=1}^r a_i + \sum_{i=1}^r b_i,
\]
and each row is arranged in non-increasing order.

Andrews \cite{Andrews-1984} also discussed a variant of generalized Frobenius partitions, referred to as a $k$-colored generalized Frobenius partition. A $k$-colored generalized Frobenius partition is an array of the above form in which the integer entries are taken from $k$ distinct copies of the non-negative integers, distinguished by color. The rows are ordered first by size and then by color, with the condition that no two consecutive entries in any row share the same color.

In 2022, Jiang, Rolen, and Woodbury \cite{Jiang-Rolen-Woodbury-2022} introduced the $(k,a)$-colored Frobenius partition functions $c\psi_{k,a}(n)$, which are natural generalizations of Andrews' $c\phi_k(n)$ since $c\psi_{k,\frac{k}{2}}(n)=c\phi_k(n)$. They showed that for $a \in \frac{k}{2} + \mathbb{Z}$, the generating function of $c\psi_{k,a}(n)$,
\[
C\Psi_{k,a}(q) := \sum_{n=0}^{\infty} c\psi_{k,a}(n) q^n,
\]
is the $\zeta^a$ coefficient of
\[
F_k(\tau,z) = \left( \frac{-\theta(\tau, z + \frac{1}{2})}{q^{\frac{1}{12}} \eta(\tau)} \right)^{\!k},
\]
where $q := e^{2\pi i \tau}$, $\zeta := e^{2\pi i z}$, $\tau \in \mathbb{H}$ (the upper half-plane), $z \in \mathbb{C}$, and $\eta(\tau) := q^{\frac{1}{24}} (q;q)_{\infty}$.

The first author, together with  Chen and Garvan \cite{Chen2-Garvan-arxiv}, showed how congruences modulo powers of $5$ and $7$ for the coefficients of the third-order mock theta function $f(q)$ imply congruences modulo powers of $5$ and $7$ for the coefficients of the related third-order mock theta function $\omega(q)$, using Atkin--Lehner involutions and transformation results of Zwegers. Subsequently, Garvan, Sellers, and Smoot \cite{Garvan-Sellers-Smoot-2024} demonstrated how Atkin--Lehner involutions imply surprising connections for families of Frobenius partition congruences for the generalized Frobenius partitions $c\psi_{2,0}(n)$ and $c\psi_{2,1}(n)$. They also emphasized that considerations for the general case of $c\psi_{k,\beta}$ are important for future work. For each $k$, Chen and Zhu \cite{Chen-Zhu-arxiv} constructed a vector-valued modular form for the generating functions of $c\psi_{k,\beta}(n)$ and determined an equivalence relation among all $\beta$. With the aid of these correspondences, they proved a family of congruences for $c\phi_3(n)$. Wang and Wang \cite{Wang-Wang} gave a new example of Chen and Zhu's framework for $c\psi_{4,\beta}(n)$.

In this paper, we establish a connection between $C\Psi_{6,3}$ and $C\Psi_{6,0}$ via an Atkin--Lehner involution, and prove congruences modulo powers of $3$ for $c\psi_{6,0}(n)$. From \cite[Theorem 2.1]{Baruah-Sarmah-2015}, Baruah and Sarmah expressed the generating function of $c\phi_6(n)$ in terms of Ramanujan's theta functions:
\[
\begin{aligned}
\sum_{n=0}^{\infty} c\phi_6(n) q^n = \frac{1}{(q;q)_{\infty}^6} \bigl\{ & \varphi^3(q) \varphi(q^2) \varphi(q^6) + 24q \psi^3(q) \psi(q^2) \psi(q^3) \\
& + 4q^2 \varphi^3(q) \psi(q^4) \psi(q^{12}) \bigr\},
\end{aligned}
\]
where
\[
\varphi(q) = (-q;q^2)_{\infty}^2 (q^2;q^2)_{\infty}, \qquad
\psi(q) = (-q;q^2)_{\infty} (q^4;q^4)_{\infty}.
\]
Applying \cite[Theorem 2]{Jiang-Rolen-Woodbury-2022}, we derive the generating function of $c\psi_{6,0}(n)$, which is presented in Lemma \ref{generating-cpsi-6-0}.

We have previously established the following congruence family for the generalized $6$-colored Frobenius partition function $c\phi_6(n)$ (i.e., $c\psi_{6,3}(n)$):

\begin{theorem}\label{cpsi-6-3}
\cite[Theorem 1.1]{Chen-Yin-arxiv}
Let $n, \alpha \in \mathbb{Z}_{\geq 1}$ such that $4n \equiv 1 \pmod{3^\alpha}$. Then
\[
c\phi_6(n) \equiv 0 \pmod{3^{\lfloor \alpha/2 \rfloor + 2}}.
\]
\end{theorem}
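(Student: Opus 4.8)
We sketch the approach. The plan is to realize $\sum_{n\ge0}c\phi_6(n)q^n$ as a (meromorphic) modular form on a congruence subgroup and to follow the coefficients in the progression $4n\equiv1\pmod{3^\alpha}$ under an iterated Atkin-type operator. Starting from the Baruah--Sarmash representation quoted above, I would first convert the theta functions $\varphi,\psi$ into eta-quotients, obtaining $\sum_n c\phi_6(n)q^n$ as an explicit linear combination of eta-quotients on $\Gamma_0(24)$. The residue class $4n\equiv1\pmod3$ is $n\equiv1\pmod3$, so it is isolated by the shifted dissection operator $\mathcal U\colon\sum a_nq^n\mapsto\sum a_{3n+1}q^n$, a twist of $U_3$; iterating $\mathcal U$ a total of $\alpha$ times then isolates exactly the class $4n\equiv1\pmod{3^\alpha}$ (up to an explicit shift). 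Thus the first step is to build a sequence $\{L_\alpha\}_{\alpha\ge1}$, where $L_\alpha$ is $\mathcal U(L_{\alpha-1})$ rescaled by a fixed eta-quotient chosen so that every $L_\alpha$ stays inside one finite space of modular functions.

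The second step is to pin down a genus-zero modular curve on which to work: a group $\Gamma_0(N)$ with $N\mid24$ of genus $0$ (after lowering the level by the usual Atkin--Lehner and $U_3$-on-$\Gamma_0$ arguments), together with a Hauptmodul $t$ and a fixed form $f$ carrying the weight, such that each $L_\alpha$ lies in the rank-one module $f\cdot\Z[t]$; pole-order bounds at the cusps keep the $t$-degree of $L_\alpha$ bounded, so the computation is finite at every stage. The structural heart of the argument is an explicit \emph{modular equation}: one shows $\mathcal U(ft^j)=f\,P_j(t)$ for explicit integer polynomials $P_j$, which packages the action of $\mathcal U$ as an integer matrix $M$ on the coordinate vector of $L_\alpha$ in the basis $\{ft^j\}$. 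Verifying these finitely many identities — typically by matching $q$-expansions to a provably sufficient order — reduces the entire theorem to $3$-adic linear algebra about $M$ and the initial vector of $L_1$.

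The main obstacle, and the reason the claimed power of $3$ grows like $\FL{\alpha/2}$ rather than linearly, is the fine control of the $3$-adic valuations under iteration. A term-by-term estimate shows only that $M$ gains one factor of $3$ every \emph{two} applications of $\mathcal U$, exactly as in the classical analysis of $p(n)$ modulo powers of $7$. Concretely, I expect to prove the key lemma that the reduction $\overline M:=M\bmod3$ satisfies $\overline M^{\,2}=0$, so that $3\mid M^2$ entrywise; writing $M=3M_1+M_0$ with $M_0^2\equiv0\pmod3$ then yields $M^2=3N$ for an integer matrix $N$, and two applications of $\mathcal U$ raise the guaranteed $3$-valuation of the coefficient vector by at least one. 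This is the delicate part: establishing the nilpotency (not merely divisibility) of $\overline M$ requires the precise modular equation from the previous step, and likely computer assistance to certify it.

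Finally, with the two base cases in hand — a direct computation giving $9\mid c\phi_6(n)$ for $n\equiv1\pmod3$ and $27\mid c\phi_6(n)$ for $4n\equiv1\pmod9$, i.e. $v_3(L_1)\ge2$ and $v_3(L_2)\ge3$ — the two-step bound $v_3(L_{\alpha+2})\ge v_3(L_\alpha)+1$ gives, by induction on $\alpha$, that all coefficients of $L_\alpha$ are divisible by $3^{\FL{\alpha/2}+2}$. Since these coefficients are precisely the $c\phi_6(n)$ with $4n\equiv1\pmod{3^\alpha}$, this is the assertion of the theorem.
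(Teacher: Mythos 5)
Your overall strategy --- realizing the generating function as eta-quotients, iterating weighted $U_3$-type operators attached to a genus-zero Hauptmodul, certifying the resulting modular equations by finite $q$-expansion checks, and running a $3$-adic induction that gains one power of $3$ per two steps from the base cases $v_3(L_1)\geq 2$, $v_3(L_2)\geq 3$ --- is indeed the route of the cited proof (the present paper only imports Theorem \ref{cpsi-6-3} from \cite{Chen-Yin-arxiv}, but recalls its skeleton in Definitions \ref{def-A-B-3}--\ref{t-p-y} and \eqref{L-ODD}--\eqref{L-EVEN}). However, your dissection step contains a genuine arithmetic error. Iterating the single fixed operator $\mathcal{U}\colon\sum a_nq^n\mapsto\sum a_{3n+1}q^n$ a total of $\alpha$ times extracts the progression $n\equiv 1+3+\cdots+3^{\alpha-1}=\frac{3^\alpha-1}{2}\pmod{3^\alpha}$, i.e.\ $2n\equiv-1\pmod{3^\alpha}$ --- which is the progression of Theorem \ref{cpsi-6-0} in this paper, not the one you are proving. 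The class $4n\equiv1\pmod{3^\alpha}$ has $3$-adic digit string $1,2,0,2,0,\dots$ (for instance $4n\equiv1\pmod 9$ means $n\equiv7\pmod 9$, whereas two iterations of $\mathcal{U}$ give $n\equiv4\pmod 9$), so the shift must vary from step to step, and no fixed terminal shift can reconcile the two progressions, since $4^{-1}-\frac{3^\alpha-1}{2}\pmod{3^\alpha}$ is not constant in $\alpha$. This is precisely why the actual proof alternates \emph{two} distinct operators $U_A=U_3(A\,\cdot)$ and $U_B=U_3(B\,\cdot)$ with different eta-prefactors $A,B$ (Definition \ref{def-A-B-3}): the alternation of the prefactors is what encodes the alternating digits.

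Second, your structural picture --- a fixed rank-one module $f\cdot\Z[t]$ with bounded $t$-degree, a single integer matrix $M$, and the key lemma $\overline{M}^{\,2}=0$ modulo $3$ --- does not match the mechanism that makes the induction work here. As \eqref{L-ODD} and \eqref{L-EVEN} show, $L_{2\alpha-1}=p_0y^{3^{2\alpha}-1}\sum_{n\geq-1}d_n^{(2\alpha-1)}t^n$ and $L_{2\alpha}=p_1y^{3^{2\alpha+1}-3}\sum_{n\geq0}d_n^{(2\alpha)}t^n$: the prefactor carries an exponentially growing power of the auxiliary function $y$, so the iterates do not stay inside one finitely generated module $f\cdot\Z[t]$ with $f$ fixed, the action of each $U$-step is not a single fixed finite matrix, and there is no one matrix $\overline{M}$ whose nilpotency could be certified once and for all. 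The gain of one power of $3$ per pair of steps is instead obtained, as in Atkin's classical treatment \cite{Atkin-1967}, from explicit $3$-adic valuation lower bounds on the coefficients of the modular equations expressing $U_A$ and $U_B$ applied to the monomials $y^it^j$, propagated through the resulting recurrences for the families $d_n^{(\alpha)}$. Your base-case divisibilities and the target exponent $\lfloor\alpha/2\rfloor+2$ are correct, but as written your outline isolates the wrong arithmetic progression and rests on a nilpotency lemma whose hypothesis (a fixed finite-dimensional $U$-stable space) fails in this setting; both defects must be repaired --- the first by introducing two alternating operators, the second by replacing matrix nilpotency with coefficientwise valuation estimates --- before the induction can close.
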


Inspired by \cite{Chen2-Garvan-arxiv},\cite{Chen-Zhu-arxiv} and \cite{Garvan-Sellers-Smoot-2024}, we connect $C\Psi_{6,0}$ with $C\Psi_{6,3}$ via Atkin--Lehner involutions and discover that $c\psi_{6,0}(n)$ exhibits the following congruence family:

\begin{theorem}\label{cpsi-6-0}
Let $n, \alpha \in \mathbb{Z}_{\geq 1}$ such that $2n \equiv -1 \pmod{3^\alpha}$. Then
\[
c\psi_{6,0}(n) \equiv 0 \pmod{3^{\lfloor \alpha/2 \rfloor + 2}}.
\]
\end{theorem}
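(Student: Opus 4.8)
The plan is to deduce Theorem~\ref{cpsi-6-0} from the already-established congruence family for $c\phi_6 = c\psi_{6,3}$ in Theorem~\ref{cpsi-6-3} by producing an explicit identity between $C\Psi_{6,0}$ and $C\Psi_{6,3}$ coming from an Atkin-Lehner involution. Since the generating function in Lemma~\ref{generating-cpsi-6-0} carries the prefactor $q^{-1/2}$, the series $\sum_n c\psi_{6,0}(n) q^{n+1/2}$ has half-integral exponents; writing $q = Q^2$ turns it into a form in $Q$ supported on odd exponents $2n+1$, so the natural level is that of the eta quotient doubled (a divisor of $48$). This is precisely the source of the factor-of-$2$ discrepancy between the two congruence conditions: if the involution matches the $Q$-exponent $2n+1$ of $c\psi_{6,0}(n)$ with the $Q$-exponent $4m-1$ attached to $c\phi_6(m)$, then $2n+2 = 4m$, whence $4m \equiv 1 \pmod{3^\alpha}$ if and only if $2n \equiv -1 \pmod{3^\alpha}$. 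Thus the arithmetic progression governing Theorem~\ref{cpsi-6-3} is carried exactly onto the one governing Theorem~\ref{cpsi-6-0}.

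First I would realize both generating functions as eta quotients (resp.\ theta quotients) on $\Gamma_0(N)$ with a suitable character, using the expressions recalled above and in Lemma~\ref{generating-cpsi-6-0}. Next I would select the Atkin-Lehner (or Fricke) involution $W_Q$ of this level under which the image of the normalized $C\Psi_{6,0}$ equals a constant multiple of the normalized $C\Psi_{6,3}$, and verify the intertwining identity
\begin{equation*}
C\Psi_{6,0} \mid W_Q = \lambda\, C\Psi_{6,3},
\end{equation*}
for an explicit scalar $\lambda$; such an identity is expected in light of the equivalence relation among the $C\Psi_{6,\beta}$ found in \cite{Chen-Zhu-arxiv}. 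Because an eta quotient transforms into another eta quotient under $W_Q$ with a computable automorphy factor, this reduces to a finite check: comparing the orders of vanishing at every cusp and then matching finitely many leading Fourier coefficients, which the valence formula turns into a verifiable identity. Reading off Fourier coefficients from this identity then yields a term-by-term relation of the shape $c\psi_{6,0}(n) = u_n\, c\phi_6(m(n))$ on the relevant progression, where $m(n)$ is the linear index match described above.

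The decisive point, and the step I expect to be the main obstacle, is controlling the scalar $\lambda$ and the coefficientwise factors $u_n$ \emph{$3$-adically}: the automorphy factor of $W_Q$ and the eta-quotient normalization should contribute only powers of $2$ and roots of unity, all of which are units at the prime $3$. Establishing this cleanly requires pinning down the exact multiplier system and the cusp behaviour of the half-integral normalization, which is delicate precisely because of the $q^{-1/2}$ shift and the doubled level. Granting that $u_n$ is a $3$-adic unit, the identity transfers the divisibility of $c\phi_6(m)$ by $3^{\lfloor \alpha/2\rfloor + 2}$ on $4m \equiv 1 \pmod{3^\alpha}$ to the \emph{same} power of $3$ for $c\psi_{6,0}(n)$ on $2n \equiv -1 \pmod{3^\alpha}$, and invoking Theorem~\ref{cpsi-6-3} completes the proof.
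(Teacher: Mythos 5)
There is a genuine gap at the decisive step of your plan. From an identity of the shape $C\Psi_{6,0}\mid W_Q = \lambda\, C\Psi_{6,3}$ you claim one can ``read off Fourier coefficients'' to get a term-by-term relation $c\psi_{6,0}(n) = u_n\, c\phi_6(m(n))$. This is not how Atkin--Lehner involutions act: $W_Q$ is a M\"obius substitution $\tau \mapsto \frac{ae\tau+b}{cN\tau+de}$, and the Fourier expansion of $f\mid W_Q$ at infinity is the expansion of $f$ at a \emph{different cusp}, not a rearrangement or unit-scaling of the coefficients of $f$ at infinity. Your exponent-matching heuristic ($Q^{2n+1}$ versus $Q^{4m-1}$, giving $2n\equiv -1$ from $4m\equiv 1$) correctly explains \emph{why} the two arithmetic progressions correspond, but no coefficientwise identity between $c\psi_{6,0}$ and $c\phi_6$ holds or is needed; the two functions take genuinely different values on the matched progressions, and the symmetry in \cite{Garvan-Sellers-Smoot-2024} is likewise a symmetry of congruence \emph{families}, not of coefficients. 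Consequently the transfer step of your argument fails, and the $3$-adic unit analysis of $\lambda$ and $u_n$ (which you correctly flag as the crux, and which would indeed only involve powers of $2$ and roots of unity) has nothing to act on. A single modular identity also cannot by itself encode congruences modulo \emph{all} powers $3^{\lfloor\alpha/2\rfloor+2}$; some inductive mechanism must be transported.

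What the paper actually does is conjugate the entire $U_3$-iteration from the proof of Theorem \ref{cpsi-6-3} by the involution. It sandwiches $W_4$ between two substitutions $q\mapsto -q$ (i.e.\ $\tau\mapsto\tau+\tfrac12$), which both resolves the $q^{-1/2}$ shift you worried about (avoiding your level-$48$, half-integral detour) and produces a matrix reducing to $\gamma=\left(\begin{smallmatrix}23 & 14\\ 18 & 11\end{smallmatrix}\right)\in\Gamma_0(18)$. Applying $\gamma$ to the auxiliary functions $A,B,t,y,p_0,p_1,L_0$ yields explicit eta quotients $\widetilde A,\widetilde B,\widetilde t,\widetilde y,\widetilde p_0,\widetilde p_1,\widetilde{L_0}$, with the key identification $\widetilde{L_0} = \frac{2q^{1/2}\eta_3\eta_1^{11}\eta_4^2}{\eta_6^2\eta_2^{11}}\, C\Psi_{6,0}(q)$. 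Defining $\widetilde L_\alpha$ by the conjugated operators $U_{\widetilde A}, U_{\widetilde B}$ and using the commutation $(f\mid U_p)\mid W_e = (f\mid W_e)\mid U_p$, one proves inductively $\widetilde L_\alpha = L_\alpha(\gamma\tau)$; hence the expansions \eqref{L-ODD}--\eqref{L-EVEN} transform into $\widetilde L_{2\alpha-1} = \widetilde p_0\,\widetilde y^{\,3^{2\alpha}-1}\sum_{n\ge -1} d_n^{(2\alpha-1)}\widetilde t^{\,n}$ (and similarly for even index) with the \emph{same} integer coefficients $d_n^{(\alpha)}$, which already carry the divisibility $3^{\lfloor\alpha/2\rfloor+2}$ from the earlier paper (the factor $-\tfrac12$ in $\widetilde y$ being a $3$-adic unit). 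An Atkin-style lemma (Lemma \ref{lem-L-alpha}) identifies $\widetilde L_\alpha$ with the generating function of $c\psi_{6,0}(3^\alpha n+\lambda_\alpha)$, completing the proof. The missing idea in your proposal is precisely this transport of the inductive $U$-sequence structure through $\gamma$, rather than a (nonexistent) termwise coefficient identity.
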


In this paper, we prove Theorem \ref{cpsi-6-0} by using Atkin--Lehner involutions to relate $C\Psi_{6,3}$ and $C\Psi_{6,0}$. The paper is organized as follows. Section \ref{sec-pre} collects the necessary notations and definitions. In Section \ref{sec-cpsi-6-0}, we briefly recall the proof of the congruence properties for $c\phi_6(n)$ modulo powers of $3$ and then use Atkin--Lehner involutions to prove Theorem \ref{cpsi-6-0}.

\section{Preliminaries}\label{sec-pre}

Throughout the paper, we use the following conventions: $\mathbb{N}^* = \{1,2,\dots\}$ denotes the set of positive integers. For $x \in \mathbb{R}$, the symbol $\lfloor x \rfloor$ denotes the greatest integer less than or equal to $x$.

\begin{definition}
For $f: \mathbb{H} \to \mathbb{C}$ and $m \in \mathbb{N}^*$, we define $U_m(f): \mathbb{H} \to \mathbb{C}$ by
\[
U_m(f)(\tau) := \frac{1}{m} \sum_{\lambda=0}^{m-1} f\!\left(\frac{\tau+\lambda}{m}\right), \qquad \tau \in \mathbb{H}.
\]
\end{definition}

We also write $U_m(f)$ as $f|U_m$. The operator $U_m$ is linear over $\mathbb{C}$. The operators $U_m$, introduced by Atkin and Lehner \cite{Atkin-Lehner-1970}, are closely related to Hecke operators. They typically arise in the context of partition congruences \cite{Andrews-1976}, mostly because of the property that if
\[
f(\tau) = \sum_{n=-\infty}^{\infty} f_n q^n \qquad (q = e^{2\pi i \tau}),
\]
then
\[
U_m(f)(\tau) = \sum_{n=-\infty}^{\infty} f_{mn} q^n.
\]

We also recall the Atkin--Lehner involutions and their actions on modular functions, following \cite[Lemma 7]{Atkin-Lehner-1970} and \cite[Lemma 5.2]{Chen2-Garvan-arxiv}.

\begin{lemma}
Let $p$ be prime. If $f$ is a non-holomorphic modular function on $\Gamma_0(pN)$ with $p \mid N$, then $f \mid U_p$ is a non-holomorphic modular function on $\Gamma_0(N)$.
\end{lemma}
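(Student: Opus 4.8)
The plan is to realize $U_p$ as a normalized sum over coset representatives and then to deduce the $\Gamma_0(N)$-invariance of $f\mid U_p$ from the $\Gamma_0(pN)$-invariance of $f$ by means of a single matrix identity. Writing the weight-zero slash action as $\left(f\mid\MAT{a}{b}{c}{d}\right)(\tau)=f\!\left(\frac{a\tau+b}{c\tau+d}\right)$ and setting $\alpha_j=\MAT{1}{j}{0}{p}$ for $0\le j\le p-1$, the definition of $U_p$ recorded above reads
\begin{align*}
(f\mid U_p)(\tau)=\frac{1}{p}\sum_{j=0}^{p-1}\left(f\mid\alpha_j\right)(\tau).
\end{align*}
Hence for any $\gamma=\MAT{a}{b}{c}{d}\in\Gamma_0(N)$ I would compute $(f\mid U_p)\mid\gamma=\frac1p\sum_{j}f\mid(\alpha_j\gamma)$, using that the Möbius action is a right action, and then seek to rewrite each $\alpha_j\gamma$ in the form $\gamma_j\,\alpha_{\sigma(j)}$ with $\gamma_j\in\Gamma_0(pN)$ and $\sigma$ a permutation of $\{0,\dots,p-1\}$.

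The key step is this factorization, and it is exactly where the hypothesis $p\mid N$ is used. Since $N\mid c$ and $p\mid N$ we have $p\mid c$; moreover $ad-bc=1$ forces $ad\equiv1\pmod p$, so $a$ and $d$ are units modulo $p$. A direct multiplication gives $\alpha_j\gamma=\MAT{a+jc}{b+jd}{pc}{pd}$. I would define $j'=\sigma(j)\in\{0,\dots,p-1\}$ by the congruence $aj'\equiv b+jd\pmod p$, uniquely solvable because $a$ is a unit, and set $\gamma_j=\MAT{a+jc}{b_j}{pc}{d-cj'}$, where $b_j$ is the integer with $pb_j=b+jd-(a+jc)j'$. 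Integrality of $b_j$ follows from $p\mid c$ together with the defining congruence; a short expansion (or a determinant comparison with $\det(\alpha_j\gamma)=p$) gives $\det\gamma_j=1$; and $pN\mid pc$ shows $\gamma_j\in\Gamma_0(pN)$. One checks $\gamma_j\alpha_{j'}=\alpha_j\gamma$ entrywise, and because $a^{-1}d$ is a unit modulo $p$ the assignment $j\mapsto j'$ is a bijection. Consequently $f\mid(\alpha_j\gamma)=(f\mid\gamma_j)\mid\alpha_{\sigma(j)}=f\mid\alpha_{\sigma(j)}$, and summing over $j$ yields $(f\mid U_p)\mid\gamma=\frac1p\sum_{j}f\mid\alpha_{\sigma(j)}=f\mid U_p$, which is the desired invariance under $\Gamma_0(N)$.

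It then remains to record the analytic conditions that promote a $\Gamma_0(N)$-invariant function to a modular function. Meromorphy on $\mathbb{H}$ is immediate, since $f\mid U_p$ is a finite combination of $f$ precomposed with the holomorphic maps $\tau\mapsto(\tau+j)/p$. At the cusp $\infty$, the action $U_p\big(\sum_n f_n q^n\big)=\sum_n f_{pn}q^n$ shows that a finite-order pole of $f$ produces only a finite-order pole of $f\mid U_p$; the remaining cusps of $\Gamma_0(N)$ are handled by conjugating the $\alpha_j$ by suitable scaling matrices and invoking meromorphy of $f$ at the corresponding cusps of $\Gamma_0(pN)$, exactly as in \cite[Lemma 7]{Atkin-Lehner-1970}.

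I expect the main obstacle to be the simultaneous bookkeeping in the factorization $\alpha_j\gamma=\gamma_j\alpha_{\sigma(j)}$: one must verify at once that $\gamma_j$ has integer entries, determinant $1$, lower-left entry divisible by $pN$, and that $\sigma$ is a genuine permutation, and each of these four points rests on $p\mid N$. The cusp analysis, though standard, is the other place requiring care, to ensure that the $U_p$-average introduces no essential singularities.
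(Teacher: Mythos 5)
Your argument is correct and coincides with the standard coset-decomposition proof underlying the sources the paper cites for this lemma (\cite[Lemma 7]{Atkin-Lehner-1970}, \cite[Lemma 5.2]{Chen2-Garvan-arxiv}); the paper itself supplies no proof, and your factorization $\alpha_j\gamma=\gamma_j\alpha_{\sigma(j)}$ with $\gamma_j\in\Gamma_0(pN)$ — using $p\mid N$ to force $p\mid c$, hence integrality of $b_j$, unit $a,d$ modulo $p$, and bijectivity of $\sigma$ — is exactly the identity needed. The only adjustment is one of register: since the lemma concerns \emph{non-holomorphic} modular functions, your closing appeal to meromorphy and $q$-expansions should be replaced by the corresponding real-analyticity and growth conditions at the cusps, which carry over under the same conjugation by scaling matrices.
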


\begin{definition}\cite[p.~3]{Chan-Lang-1998}\label{W-operator}
If $e \parallel N$, we call the matrix
\[
W_e := \begin{pmatrix} ae & b \\ cN & de \end{pmatrix}, \qquad a,b,c,d \in \mathbb{Z}, \quad \det(W_e) = e,
\]
an Atkin--Lehner involution of $\Gamma_0(N)$.
\end{definition}

\begin{lemma}\cite[Lemma 6]{Chan-Toh}
Let $p$ be prime, $p \mid N$, $e \parallel N$, and $(p,e)=1$. If $f(\tau)$ is a modular function on $\Gamma_0(N)$, then
\[
(f \mid U_p) \mid W_e = (f \mid W_e) \mid U_p.
\]
\end{lemma}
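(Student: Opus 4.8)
The plan is to write both sides as finite sums of weight-zero slashes (composition with the M\"obius action) over explicit coset representatives and then match them term by term. Since $f$ is a modular function on $\Gamma_0(N)$ we have $f\mid\sigma=f$ for every $\sigma\in\Gamma_0(N)$, and the slash is multiplicative, so $f\mid\gamma$ depends only on the coset $\Gamma_0(N)\gamma$. Writing $U_p$ through its standard representatives $\MAT{1}{\lambda}{0}{p}$, I would record
\begin{align*}
(f\mid U_p)\mid W_e=\frac1p\sum_{\lambda=0}^{p-1}f\,\Big|\,\MAT{1}{\lambda}{0}{p}W_e,\qquad (f\mid W_e)\mid U_p=\frac1p\sum_{\mu=0}^{p-1}f\,\Big|\,W_e\MAT{1}{\mu}{0}{p}.
\end{align*}
Thus it suffices to produce a bijection $\lambda\mapsto\mu(\lambda)$ of $\{0,\dots,p-1\}$ and matrices $\sigma_\lambda\in\Gamma_0(N)$ with $\MAT{1}{\lambda}{0}{p}W_e=\sigma_\lambda\,W_e\MAT{1}{\mu}{0}{p}$, for then the two sums agree after reindexing.

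Next I would fix the arithmetic. Write $N=eM$; since $e\Vert N$ we have $(e,M)=1$, and from $\det W_e=(ae)(de)-b(cN)=e$ I get the relation $ade-bcM=1$. Because $p\mid N$ and $(p,e)=1$, the prime $p$ divides $M$; reducing $ade-bcM=1$ modulo $p$ then gives $ade\equiv1\pmod p$, so each of $a,d,e$ is a unit modulo $p$. I would now define the candidate $\sigma_\lambda:=\MAT{1}{\lambda}{0}{p}W_e\MAT{1}{\mu}{0}{p}^{-1}W_e^{-1}$, noting that its determinant equals $pe\cdot\tfrac1p\cdot\tfrac1e=1$ automatically, so only integrality of the entries and divisibility of the lower-left entry by $N$ remain to be checked.

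The heart of the argument is a single computation of the entries of $\sigma_\lambda$ after substituting $N=eM$. Two features make it work. First, every entry carrying a denominator $p$ also carries a factor $cM$, and since $p\mid M$ these denominators cancel; this is exactly where $p\mid N$ (together with $(p,e)=1$) is used. Second, the remaining integrality constraint collapses, after using $ade-bcM=1$ and $p\mid M$, to the single congruence $\mu\,ae\equiv b+\lambda\,de\pmod p$. As $ae$ is a unit modulo $p$, this determines $\mu$ uniquely, and the resulting map $\lambda\mapsto\mu\equiv(ae)^{-1}(b+\lambda de)\pmod p$ is affine with unit slope $a^{-1}d$, hence a bijection of residues modulo $p$. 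For this choice of $\mu(\lambda)$ all entries of $\sigma_\lambda$ become integers, and the lower-left entry simplifies to $ecM\big((p-1)d+\mu cM\big)$, which is divisible by $eM=N$; therefore $\sigma_\lambda\in\Gamma_0(N)$.

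Putting these together, for each $\lambda$ one has $f\mid\MAT{1}{\lambda}{0}{p}W_e=f\mid\sigma_\lambda W_e\MAT{1}{\mu}{0}{p}=f\mid W_e\MAT{1}{\mu}{0}{p}$, and summing over the bijection $\lambda\mapsto\mu$ gives the claimed identity. The main obstacle is purely the bookkeeping in the entry computation: tracking which entries are automatically integral and isolating the one genuine congruence that pins down $\mu$, while confirming that the induced map on residues is a bijection. The coprimality hypotheses $(p,e)=1$ and $p\mid N$ enter precisely to guarantee both the cancellation of the $p$-denominators and the invertibility of $ae$ modulo $p$ that makes $\lambda\mapsto\mu$ bijective.
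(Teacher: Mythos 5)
Your proposal is correct, and since the paper does not prove this lemma but quotes it from Chan--Toh (Lemma 6), the relevant comparison is with the standard argument there (going back to Atkin--Lehner), which is exactly yours: expand both sides as slash-sums over the representatives $\MAT{1}{\lambda}{0}{p}$ and show the discrepancy $\sigma_\lambda=\MAT{1}{\lambda}{0}{p}W_e\MAT{1}{\mu}{0}{p}^{-1}W_e^{-1}$ lies in $\Gamma_0(N)$ for a suitable bijection $\lambda\mapsto\mu$. Your arithmetic checks out --- writing $N=eM$ gives $ade-bcM=1$, hence $ade\equiv 1\pmod p$ since $p\mid M$; the $p$-denominators in the diagonal entries cancel against $M$; the only genuine constraint is $ae\mu\equiv b+\lambda de\pmod p$ from the upper-right entry, yielding an affine reindexing map of unit slope $a^{-1}d$ modulo $p$; and the lower-left entry $ecM\bigl((p-1)d+\mu cM\bigr)$ is indeed divisible by $N$ --- so nothing further is needed.
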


\begin{lemma}\label{W-effect}\cite[Corollary 2.2]{Chan-Lang-1998}
Let $W_e$ be an Atkin--Lehner involution of $\Gamma_0(N)$. Let $t > 0$ be such that $t \mid N$. Then
\[
\eta(t W_e \tau) = \eta\!\left(t \frac{ae\tau + b}{cN\tau + de}\right) = v_{\eta}(M) \left(\frac{cN\tau + de}{\delta}\right)^{\!1/2} \eta\!\left(\frac{et}{\delta} \tau\right),
\]
where $\delta = (e,t)$, $v_\eta$ is the eta-multiplier, and
\[
M = \begin{pmatrix} a\delta & bt/\delta \\ cN\delta/et & de/\delta \end{pmatrix} \in \SL_2(\mathbb{Z}).
\]
\end{lemma}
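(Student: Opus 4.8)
The plan is to reduce the transformation of $\eta$ under the Atkin--Lehner matrix $tW_e$ to the classical transformation law of the Dedekind eta function under $\SLZ$, namely $\eta(M\sigma)=v_\eta(M)(C\sigma+D)^{1/2}\eta(\sigma)$ for $M=\MAT{A}{B}{C}{D}\in\SLZ$. First I would record the action of $tW_e$ as a single fractional linear transformation: composing $\tau\mapsto W_e\tau$ with the dilation $w\mapsto tw$ corresponds to the matrix product $\MAT{t}{0}{0}{1}\MAT{ae}{b}{cN}{de}=\MAT{ate}{bt}{cN}{de}$, so that $tW_e\tau=\dfrac{ate\tau+bt}{cN\tau+de}$, a transformation whose defining matrix has determinant $t\det W_e=te$.

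The heart of the argument is to factor this determinant-$te$ matrix as $M$ times a diagonal dilation matrix, with $M=\MAT{a\delta}{bt/\delta}{cN\delta/(et)}{de/\delta}$, and to check that $M$ genuinely lies in $\SLZ$. The determinant is immediate, since $\det M=a\delta\cdot\tfrac{de}{\delta}-\tfrac{bt}{\delta}\cdot\tfrac{cN\delta}{et}=ade-\tfrac{bcN}{e}=\tfrac{\det W_e}{e}=1$. Integrality of $a\delta$, $bt/\delta$ and $de/\delta$ is clear from $\delta=(e,t)$; the only delicate entry is $\tfrac{cN\delta}{et}$, and this is exactly where the hypotheses $e\Vert N$ and $t\mid N$ must be used. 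Writing $e=\delta e_1$ and $t=\delta t_1$ with $(e_1,t_1)=1$, one computes $\tfrac{cN\delta}{et}=\tfrac{c(N/e)}{t_1}$, and $t\mid N$ forces $t_1\mid e_1(N/e)$, whence $t_1\mid (N/e)$ because $(t_1,e_1)=1$; thus the entry is an integer. I expect this gcd bookkeeping, together with the care needed to keep track of the half--integral eta multiplier $v_\eta(M)$ and the correct branch of the square root, to be the principal obstacle.

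Once $M\in\SLZ$ is in hand, I would apply the classical eta law to $\eta(M\sigma)$, where $\sigma$ is the dilate of $\tau$ dictated by the diagonal factor, chosen so that $M\sigma=tW_e\tau$. Substituting this $\sigma$ into the automorphy factor $C\sigma+D$ collapses it to $\dfrac{cN\tau+de}{\delta}$, and the residual eta factor is $\eta$ evaluated at the prescribed multiple of $\tau$ determined by $e$, $t$ and $\delta=(e,t)$ as in the statement. Assembling the pieces yields $\eta(tW_e\tau)=v_\eta(M)\left(\dfrac{cN\tau+de}{\delta}\right)^{1/2}\eta\!\left(\tfrac{et}{\delta}\tau\right)$, which is the asserted transformation formula; the Fricke case $W_N=\MAT{0}{-1}{N}{0}$ (so $\delta=t$ and $M$ is the modular $S$-matrix) is a convenient sanity check on the normalization.
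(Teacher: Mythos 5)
Your reduction to the classical $\SLZ$ transformation law is the standard route here (note the paper offers no proof of this lemma at all, only the citation to Chan--Lang, so your sketch is essentially the proof of their Corollary 2.2), and your determinant and integrality checks for $M$ are correct; in particular the gcd argument $t_1\mid e_1(N/e)$ with $(t_1,e_1)=1$ forcing $t_1\mid N/e$ is right (it in fact needs only $e\mid N$ and $t\mid N$; the condition $e\Vert N$ is what makes a determinant-$e$ matrix of this shape exist and normalize $\Gamma_0(N)$ in the first place). The genuine problem is your last step. Matching entries in the factorization
\begin{equation*}
\MAT{t}{0}{0}{1}\MAT{ae}{b}{cN}{de}=\MAT{ate}{bt}{cN}{de}=\MAT{a\delta}{bt/\delta}{cN\delta/(et)}{de/\delta}\MAT{te/\delta}{0}{0}{\delta}
\end{equation*}
forces $\sigma=\frac{te}{\delta^{2}}\tau$ (the numerator is scaled by $te/\delta$ and the denominator by $\delta$), so what your computation actually proves is
$\eta(tW_e\tau)=v_\eta(M)\bigl(\frac{cN\tau+de}{\delta}\bigr)^{1/2}\eta\bigl(\frac{te}{\delta^{2}}\tau\bigr)$.
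The phrase ``the prescribed multiple of $\tau$ \dots\ as in the statement'' glosses over the one place where something can go wrong, and it did: the identity you assemble, with $\eta\bigl(\frac{et}{\delta}\tau\bigr)$, is false whenever $\delta>1$. Your own proposed Fricke sanity check exposes this had you run it: for $W_N$ one has $\delta=t$ and $M=S$, and the $S$-law gives $\eta(-t/(N\tau))=\sqrt{-iN\tau/t}\,\eta\bigl(\frac{N}{t}\tau\bigr)$, which is the $\delta^{2}$ version; the $\delta$ version would output $\eta(N\tau)$, wrong unless $t^{2}=N$.

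In fairness, the slip originates in the lemma as printed: Chan--Lang's Corollary 2.2 reads $\eta\bigl(\frac{te}{\delta^{2}}\tau\bigr)$, and this paper itself silently uses the $\delta^{2}$ version in Section 3, where $W_4$ on $\Gamma_0(36)$ sends $\eta_1\mapsto\eta_4$, $\eta_2\mapsto\eta_2$, $\eta_4\mapsto\eta_1$, $\eta_9\mapsto\eta_{36}$, $\eta_{18}\mapsto\eta_{18}$, $\eta_{36}\mapsto\eta_9$ --- exactly $t\mapsto te/\delta^{2}$, an involution on the divisors of $36$, consistent with the displayed image of $A$; the stated rule $t\mapsto te/\delta$ would instead send $\eta_2\mapsto\eta_4$ and $\eta_{18}\mapsto\eta_{36}$ and is not even an involution. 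So: right method, correct bookkeeping up to the final line, but as a proof of the statement verbatim it fails, because the statement verbatim contains a typo ($\delta$ should be $\delta^{2}$); a careful execution of your own plan would have produced, and should state, the corrected formula.
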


The proof of Theorem \ref{cpsi-6-0} relies on some identities among eta-quotients. These identities can be verified using the theory of modular functions. By making use of the valence formula for modular forms, Garvan has written a MAPLE package called ETA; see
\[
\label{r:eta}
{https://qseries.org/fgarvan/qmaple/ETA/}.
\]
A tutorial for this package can be found in \cite{gtutorial}.

We need the following generating function of $c\psi_{6,0}(n)$.

\begin{lemma}\label{generating-cpsi-6-0}
We have
\begin{align*}
\sum_{n=0}^{\infty} c\psi_{6,0}(n) q^n = q^{-\frac{1}{2}} \Bigg( & 8 \frac{\eta_4^{11} \eta_{12}^5}{\eta_2^5 \eta_1^6 \eta_6^2 \eta_8^2 \eta_{24}^2} + 4 \frac{\eta_2^9 \eta_8^2 \eta_{12}^5}{\eta_1^{10} \eta_4^3 \eta_6^2 \eta_{24}^2} \\
& + 32 \frac{\eta_4^5 \eta_8^2 \eta_{24}^2}{\eta_1^6 \eta_2^3 \eta_{12}} + 4 \frac{\eta_4^3 \eta_2^7 \eta_{24}^2}{\eta_1^{10} \eta_8^2 \eta_{12}} + 8 \frac{\eta_2^{11} \eta_6^2}{\eta_1^{11} \eta_4^2 \eta_3} \Bigg).
\end{align*}
\end{lemma}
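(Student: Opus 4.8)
The plan is to obtain this generating function by specializing the Jiang--Rolen--Woodbury description \cite[Theorem 2]{Jiang-Rolen-Woodbury-2022} to $(k,a)=(6,0)$. By definition $\sum_{n\ge 0}c\psi_{6,0}(n)q^n = C\Psi_{6,0}(q)$ is the $\zeta^{0}$-coefficient of
\[
F_6(\tau,z)=\left(\frac{-\theta(\tau,z+\tfrac12)}{q^{1/12}\eta(\tau)}\right)^{6},
\]
which is a weak Jacobi form of weight $0$ and index $3$. So the first step is to make this coefficient extraction explicit: expand $\theta(\tau,z+\tfrac12)$ as a product in $q$ and $\zeta$ via the Jacobi triple product, raise the quotient to the sixth power, and read off the constant term in $\zeta$. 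This is exactly the companion computation to the one Baruah and Sarmash \cite[Theorem 2.1]{Baruah-Sarmash-2015} carry out for the coefficient $c\psi_{6,3}=c\phi_6$, except that here we pick out $\zeta^{0}$ rather than $\zeta^{3}$.

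The output of this extraction is a single $q$-series, which I would then reorganize into a sum of products of the Ramanujan theta functions $\varphi$ and $\psi$ evaluated at $q,q^{2},q^{3},q^{4},q^{6},q^{12}$, mirroring the three-term theta representation of $\sum c\phi_6(n)q^n$ recorded in the excerpt. The guiding principle is that the constant $\zeta$-coefficient of a sixth power of a theta product is a dissection/convolution that separates, by residue classes modulo small integers, into pieces each of which is a classical $\varphi$-- or $\psi$--product; here the relevant modulus data (the denominators $2,3,4,6,8,12,24$) forces the level to be $24$.

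Next I would convert every theta product so obtained into an eta-quotient, using $\varphi(q)=(-q;q^{2})_\infty^{2}(q^{2};q^{2})_\infty$, $\psi(q)=(-q;q^{2})_\infty(q^{4};q^{4})_\infty$, and $\eta_k=q^{k/24}(q^k;q^k)_\infty$, so that each $\varphi(q^{j})$ and $\psi(q^{j})$ becomes a monomial in the $\eta_k$ with $k\mid 24$; the accumulated fractional $q$-power is precisely the factor $q^{-1/2}$ displayed in the statement (one checks, for instance, that the leading $q$-exponents of the five eta-quotients are $\tfrac12,\tfrac12,\tfrac52,\tfrac32,\tfrac12$, so after multiplication by $q^{-1/2}$ the right-hand side is an honest power series in $q$). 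The resulting identity is an equality of two explicit eta-quotients that, after clearing $q^{-1/2}$, are modular functions on $\Gamma_0(24)$ with poles supported only at the cusps; by the valence formula it suffices to match finitely many initial Fourier coefficients, which is the verification automated by Garvan's ETA package, see \eqref{r:eta}.

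The main obstacle is the $\zeta^{0}$-coefficient extraction itself. Unlike $c\psi_{6,3}=c\phi_6$, whose theta representation is already available, the constant coefficient of $F_6$ in $\zeta$ is a genuine sixfold convolution, and the delicate point is to identify the correct five-term combination of $\varphi$-- and $\psi$--products, equivalently the five eta-quotients together with their integer coefficients $8,4,32,4,8$. Once this combination is pinned down, the weight- and index-bookkeeping that guarantees modularity on $\Gamma_0(24)$, and the finite valence-formula check confirming the identity, are both routine.
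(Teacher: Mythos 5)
Your overall framing is right---the paper also proves the lemma by specializing \cite[Theorem 2]{Jiang-Rolen-Woodbury-2022} to $(k,a)=(6,0)$---but your proposal stalls at exactly the step you yourself flag as ``the main obstacle,'' and that obstacle does not in fact exist. Theorem 2 of Jiang--Rolen--Woodbury is not merely the statement that $C\Psi_{6,0}$ is the $\zeta^{0}$-coefficient of $F_6$; it supplies a closed recursive formula for the theta-decomposition components of that Jacobi form, which at index $3$ gives
\begin{align*}
\sum_{n=0}^{\infty}c\psi_{6,0}(n)q^n=\frac{h_{3,0}}{q^{1/2}\eta_1^6},
\qquad
h_{3,0}=\theta_{1,1}\theta_{6,0}h_{2,0}+2\theta_{1,0}\theta_{6,3}h_{2,1}+\theta_{1,1}\theta_{6,6}h_{2,2},
\end{align*}
with $h_{2,0}=\theta_{1,1}^2\theta_{2,0}+\theta_{1,0}^2\theta_{2,2}$, $h_{2,1}=2\theta_{1,0}\theta_{1,1}\theta_{2,1}$, $h_{2,2}=\theta_{1,1}^2\theta_{2,2}+\theta_{1,0}^2\theta_{2,0}$, where $\theta_{m,a}=\sum_{n\in\mathbb{Z}}q^{(2mn+a)^2/4m}$ are classical theta constants. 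So there is no sixfold convolution to disentangle and no ``delicate identification'' of the five-term combination: each $\theta_{m,a}$ is evaluated as an eta-quotient via the Jacobi triple product (e.g.\ $\theta_{1,0}=\eta_2^5/(\eta_1^2\eta_4^2)$, $\theta_{1,1}=2\eta_4^2/\eta_2$, $\theta_{2,1}=\eta_2^2/\eta_1$, and analogously for $\theta_{2,0},\theta_{2,2},\theta_{6,0},\theta_{6,3},\theta_{6,6}$), after which the five eta-quotients and the integer coefficients $8,4,32,4,8$ drop out by elementary multiplication---the paper's words are ``a tedious but elementary simplification.''

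As written, your plan is therefore incomplete in two respects. First, you never actually produce the five-term combination; you explicitly defer it, so the proof has a hole precisely where the content lies. Second, your fallback---certifying the identity by the valence formula on $\Gamma_0(24)$ after matching finitely many coefficients---presupposes that the left-hand side (the $\zeta^{0}$-coefficient of $F_6$) is known to be modular of controlled level, weight, multiplier, and cusp behavior; establishing that rigorously requires the theta decomposition of the index-$3$ Jacobi form, i.e., the very formula from Jiang--Rolen--Woodbury that you stopped short of invoking. Once you use their explicit $h_{3,0}$, both difficulties evaporate, and no valence-formula or ETA-package verification is needed for this lemma at all (the paper reserves such machine checks for the later eta-quotient identities, cf.\ \eqref{r:eta}).
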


\begin{proof}
Using Theorem 2 of \cite{Jiang-Rolen-Woodbury-2022}, we have
\[
\sum_{n=0}^{\infty} c\psi_{6,0}(n) q^n = \frac{h_{3,0}}{q^{\frac{1}{2}} \eta_1^6},
\]
where
\begin{align*}
h_{3,0} &= \theta_{1,1} \theta_{6,0} h_{2,0} + 2 \theta_{1,0} \theta_{6,3} h_{2,1} + \theta_{1,1} \theta_{6,6} h_{2,2},\\
h_{2,0} &= \theta_{1,1}^2 \theta_{2,0} + \theta_{1,0}^2 \theta_{2,2},\\
h_{2,1} &= 2 \theta_{1,0} \theta_{1,1} \theta_{2,1},\\
h_{2,2} &= \theta_{1,1}^2 \theta_{2,2} + \theta_{1,0}^2 \theta_{2,0},\\
\theta_{m,a} &= \sum_{n \in \mathbb{Z}} q^{\frac{(2mn + a)^2}{4m}}.
\end{align*}
Recalling three well-known consequences of the Jacobi triple product identity,
\[
\theta_{1,0} = \frac{\eta_2^5}{\eta_1^2 \eta_4^2}, \qquad
\theta_{1,1} = \frac{2 \eta_4^2}{\eta_2}, \qquad
\theta_{2,1} = \frac{\eta_2^2}{\eta_1}.
\]
After tedious but elementary simplification, we obtain the generating function of $c\psi_{6,0}(n)$.
\end{proof}

We now review the notations employed in the proof of Theorem \ref{cpsi-6-3} in \cite{Chen-Yin-arxiv}.

\begin{definition}\cite[Definition 2.2]{Chen-Yin-arxiv}\label{def-A-B-3}
For $f: \mathbb{H} \to \mathbb{C}$, we define $U_A(f), U_B(f): \mathbb{H} \to \mathbb{C}$ by $U_A(f) := U_3(A f)$ and $U_B(f) := U_3(B f)$, where
\[
A := \frac{\eta_9^9 \eta_4^2 \eta_2^5}{\eta_{36}^2 \eta_{18}^5 \eta_1^9} \qquad \text{and} \qquad B := \frac{\eta_9 \eta_2^2}{\eta_{18}^2 \eta_1}.
\]
\end{definition}

\begin{definition}\cite[Definition 2.4]{Chen-Yin-arxiv}
\label{r:defL}
We define the $U$-sequence $(L_{\alpha})_{\alpha \geq 0}$ by
\[
L_0 := \frac{\eta_{12}^5 \eta_3 \eta_2^8}{\eta_{24}^2 \eta_8^2 \eta_6^4 \eta_4^3 \eta_1^3} + 24 + 4 \frac{\eta_{24}^2 \eta_8^2 \eta_3 \eta_2^{10}}{\eta_{12} \eta_6^2 \eta_4^9 \eta_1^3},
\]
and for $\alpha \geq 1$:
\[
L_{2\alpha-1} := U_A(L_{2\alpha-2}) \qquad \text{and} \qquad L_{2\alpha} := U_B(L_{2\alpha-1}).
\]
\end{definition}

\begin{definition}\label{t-p-y}\cite[Definition 2.6]{Chen-Yin-arxiv}
Let $t$, $y$, $p_0$, $p_1$ be functions defined on $\mathbb{H}$ as follows:
\[
t := \frac{\eta_{12}^4 \eta_2^2}{\eta_6^2 \eta_4^4}, \qquad
y := \frac{\eta_4^3 \eta_3}{\eta_{12} \eta_1^3}, \qquad
p_0 := \frac{\eta_{12}^4 \eta_3^{12} \eta_2^8}{\eta_6^8 \eta_4^{12} \eta_1^4}, \qquad
p_1 := \frac{\eta_{12}^2 \eta_3^6 \eta_2^4}{\eta_6^4 \eta_4^6 \eta_1^2},
\]
which have Laurent series expansions in powers of $q$ with coefficients in $\mathbb{Z}$.
\end{definition}

Besides, using Garvan's ETA package, we obtain
\[
L_0 = t^{-1} + 9t^2 + 3t + 27.
\]

\section{Proof of Theorem \ref{cpsi-6-0}}\label{sec-cpsi-6-0}

From \cite[p.~11]{Chen-Yin-arxiv}, we know that each $L_{\alpha}$ can be written as
\begin{align}
L_{2\alpha-1} &= p_0 \, y^{3^{2\alpha}-1} \sum_{n \geq -1} d_n^{(2\alpha-1)} t^n; \label{L-ODD}\\
L_{2\alpha} &= p_1 \, y^{3^{2\alpha+1}-3} \sum_{n \geq 0} d_n^{(2\alpha)} t^n, \label{L-EVEN}
\end{align}
for $\alpha \geq 1$. Theorem \ref{cpsi-6-3} was proved by showing that $L_{\alpha}$ is divisible by $3^{\lfloor \alpha/2 \rfloor + 2}$. We attempt to establish a connection between $C\Psi_{6,3}$ and $C\Psi_{6,0}$. Inspired by \cite{Garvan-Sellers-Smoot-2024}, we construct an explicit mapping between these generating functions by first sending $q \mapsto -q$, then applying an Atkin-Lehner involution matrix $W_4$, and then sending $q \mapsto -q$ once more. In particular, we consider the operator
\[
W_4 := \begin{pmatrix} 28 & 3 \\ 36 & 4 \end{pmatrix}.
\]
We begin by taking the function $A$ which is necessary for the construction of our $U_A$ operators. For $A$, we first map $q \mapsto -q$, obtaining
\[
A \mapsto \frac{\eta_1^9 \eta_4^{11} \eta_{18}^{22}}{\eta_2^{22} \eta_9^9 \eta_{36}^{11}}.
\]
We then send $\tau$ to $W\tau$. By Lemma \ref{W-effect}, we obtain
\[
\frac{\eta_1^9 \eta_4^{11} \eta_{18}^{22}}{\eta_2^{22} \eta_9^9 \eta_{36}^{11}} \mapsto \frac{\eta_1^{11} \eta_4^9 \eta_{18}^{22}}{\eta_2^{22} \eta_9^{11} \eta_{36}^9}.
\]
Finally, applying $q \mapsto -q$, we arrive at
\[
\frac{\eta_1^{11} \eta_4^9 \eta_{18}^{22}}{\eta_2^{22} \eta_9^{11} \eta_{36}^9} \mapsto \frac{\eta_2^{11} \eta_9^{11} \eta_{36}^2}{\eta_1^{11} \eta_4^2 \eta_{18}^{11}} =: \widetilde{A}.
\]
Composing the three successive mappings, we find that the matrix representing the overall action is
\[
\begin{pmatrix} 1 & 1/2 \\ 0 & 1 \end{pmatrix}
\begin{pmatrix} 28 & 3 \\ 36 & 4 \end{pmatrix}
\begin{pmatrix} 1 & 1/2 \\ 0 & 1 \end{pmatrix}
= \begin{pmatrix} 46 & 28 \\ 36 & 22 \end{pmatrix},
\]
which reduces via the associated transformation to
\[
\gamma = \begin{pmatrix} 23 & 14 \\ 18 & 11 \end{pmatrix} \in \Gamma_0(18).
\]
Under the action of $\gamma$, the auxiliary functions introduced earlier transform as follows:
\begin{align}
\widetilde{B} &:= B(\gamma\tau) = \frac{\eta_2^2 \eta_9}{\eta_1 \eta_{18}^2}, \nonumber \\
\widetilde{t} &:= t(\gamma\tau) = \frac{\eta_1^4 \eta_4^4 \eta_6^{10}}{\eta_2^{10} \eta_3^4 \eta_{12}^4}, \label{t}\\
\widetilde{y} &:= y(\gamma\tau) = -\frac{\eta_{12} \eta_3^2 \eta_2^9}{2 \eta_6^3 \eta_4^3 \eta_1^6}, \label{y}\\
\widetilde{L_0} &:= L_0(\gamma\tau) = \widetilde{t}^{-1} + 9\widetilde{t}^2 + 3\widetilde{t} + 27, \label{L0}\\
\widetilde{p_0} &:= p_0(\gamma\tau) = (1+t)^4(\gamma\tau) = (1+\widetilde{t})^4 = \frac{16 \eta_6^4 \eta_4^{12} \eta_3^8 \eta_1^{8}}{\eta_{12}^4 \eta_2^{28}}, \label{p0}\\
\widetilde{p_1} &:= p_1(\gamma\tau) = (1+t)^2(\gamma\tau) = (1+\widetilde{t})^2 = \frac{4 \eta_6^2 \eta_4^{6} \eta_3^4 \eta_1^{4}}{\eta_{12}^2 \eta_2^{14}}, \label{p1}
\end{align}
where the last equalities in \eqref{p0} and \eqref{p1} are verified directly using the ETA Maple package.

By Lemma \ref{generating-cpsi-6-0} and \eqref{L0}, using the ETA Maple package, we obtain
\[
\widetilde{L_0} = \frac{2 q^{1/2} \eta_3 \eta_1^{11} \eta_4^2}{\eta_6^2 \eta_2^{11}} \, C\Psi_{6,0}(q).
\]

Guided by the proof of Theorem \ref{cpsi-6-3}, we now define the $U$-sequence $(\widetilde{L}_{\alpha})_{\alpha \geq 0}$ for $\alpha \geq 1$:
\[
\widetilde{L}_{2\alpha-1} := U_{\widetilde{A}}(\widetilde{L}_{2\alpha-2}) \qquad \text{and} \qquad \widetilde{L}_{2\alpha} := U_{\widetilde{B}}(\widetilde{L}_{2\alpha-1}).
\]
Then we obtain the following lemma. Its proof is completely analogous to \cite[p.~23]{Atkin-1967} and is therefore omitted.

\begin{lemma}\label{lem-L-alpha}
For $\alpha \in \mathbb{N}^*$, we have
\begin{align*}
\widetilde{L}_{2\alpha-1} &= 2 \prod_{n=1}^{\infty} \frac{(1-q^n)(1-q^{3n})^{11}(1-q^{12n})^2}{(1-q^{2n})^2(1-q^{6n})^{11}} \sum_{n=0}^{\infty} c\psi_{6,0}(3^{2\alpha-1}n + \lambda_{2\alpha-1}) q^n,\\
\widetilde{L}_{2\alpha} &= 2 \prod_{n=1}^{\infty} \frac{(1-q^{n})^{11}(1-q^{4n})^2(1-q^{3n})}{(1-q^{2n})^{11}(1-q^{6n})^2} \sum_{n=0}^{\infty} c\psi_{6,0}(3^{2\alpha}n + \lambda_{2\alpha}) q^n.
\end{align*}
\end{lemma}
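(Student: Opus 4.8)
The plan is to prove both formulas simultaneously by a single induction on $\alpha$, putting the step $\widetilde{L}_{2\alpha-2}\mapsto\widetilde{L}_{2\alpha-1}$ (apply $U_{\widetilde{A}}$) and the step $\widetilde{L}_{2\alpha-1}\mapsto\widetilde{L}_{2\alpha}$ (apply $U_{\widetilde{B}}$) on exactly the same footing. Abbreviate the two product prefactors occurring in the statement as
\begin{align*}
\mathcal{E}(q):=\prod_{n=1}^{\infty}\frac{(1-q^{n})^{11}(1-q^{4n})^{2}(1-q^{3n})}{(1-q^{2n})^{11}(1-q^{6n})^{2}},\qquad \mathcal{O}(q):=\prod_{n=1}^{\infty}\frac{(1-q^{n})(1-q^{3n})^{11}(1-q^{12n})^{2}}{(1-q^{2n})^{2}(1-q^{6n})^{11}},
\end{align*}
so that $\mathcal{E}(q)=q^{1/2}\eta_3\eta_1^{11}\eta_4^2/(\eta_6^2\eta_2^{11})$ and $\mathcal{O}(q)=q^{1/2}\eta_1\eta_3^{11}\eta_{12}^2/(\eta_2^2\eta_6^{11})$. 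The relation $\widetilde{L}_0=2q^{1/2}\eta_3\eta_1^{11}\eta_4^2/(\eta_6^2\eta_2^{11})\,C\Psi_{6,0}(q)$ recorded just above the lemma is precisely $\widetilde{L}_0=2\mathcal{E}(q)\sum_{n\ge0}c\psi_{6,0}(n)q^n$, that is, the ``even'' formula at $\alpha=0$ with $\lambda_0=0$; this will furnish the base case.

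Everything then reduces to two multiplier identities among eta-quotients,
\begin{align*}
\widetilde{A}\,\mathcal{E}(q)=q^{-1}\mathcal{O}(q^3),\qquad \widetilde{B}\,\mathcal{O}(q)=q^{-1}\mathcal{E}(q^3),
\end{align*}
each of which is a routine cancellation followed by bookkeeping of the leading $q$-power. For instance $\widetilde{A}\,\mathcal{E}(q)=q^{1/2}\eta_3\eta_9^{11}\eta_{36}^2/(\eta_6^2\eta_{18}^{11})$ once the $\eta_1,\eta_2,\eta_4$ factors cancel; the eta-quotient $\eta_3\eta_9^{11}\eta_{36}^2/(\eta_6^2\eta_{18}^{11})$ has $q$-order $-3/2$, hence equals $q^{-3/2}\mathcal{O}(q^3)$, which gives the first identity, and the second is entirely analogous. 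Both are instantly confirmable with the ETA package.

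For the inductive step I would use the standard dissection rules: for a formal power series $\Phi$ one has $U_3\big(\Phi(q^3)G(q)\big)=\Phi(q)\,U_3(G(q))$, and $U_3\big(q^{-1}\sum_n g_n q^n\big)=\sum_n g_{3n+1}q^n$. Assuming $\widetilde{L}_{2\alpha-2}=2\mathcal{E}(q)\sum_n c\psi_{6,0}(3^{2\alpha-2}n+\lambda_{2\alpha-2})q^n$, the first multiplier identity gives $\widetilde{A}\,\widetilde{L}_{2\alpha-2}=2q^{-1}\mathcal{O}(q^3)\sum_n c\psi_{6,0}(3^{2\alpha-2}n+\lambda_{2\alpha-2})q^n$; applying $U_3$ pulls out $\mathcal{O}(q)$ and keeps the subprogression $n\mapsto3n+1$, so that
\begin{align*}
\widetilde{L}_{2\alpha-1}=2\mathcal{O}(q)\sum_{n\ge0}c\psi_{6,0}\big(3^{2\alpha-1}n+3^{2\alpha-2}+\lambda_{2\alpha-2}\big)q^n.
\end{align*}
The second identity handles the even step identically, so the offsets satisfy the uniform recursion $\lambda_k=3^{k-1}+\lambda_{k-1}$ with $\lambda_0=0$, i.e. $\lambda_k=(3^k-1)/2$, the unique residue with $2\lambda_k\equiv-1\pmod{3^k}$ --- exactly the class singled out in Theorem \ref{cpsi-6-0}. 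This closes the induction.

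The main obstacle will be establishing the two multiplier identities in the stated clean form: one must verify that multiplying the current prefactor by $\widetilde{A}$ (resp. $\widetilde{B}$) collapses to a pure cube-dilated prefactor times $q^{-1}$, with no residual modular function left over. This collapse is exactly what makes the $\gamma$-conjugated operators $U_{\widetilde{A}}$ and $U_{\widetilde{B}}$ act as clean coefficient extractors on the $c\psi_{6,0}$ generating function, and it is where the explicit shapes of $\widetilde{A}$, $\widetilde{B}$ and of the Atkin--Lehner conjugation $\gamma$ are really used; once these identities are in place, the remainder is the formal dissection argument of \cite[p. 23]{Atkin-1967}.
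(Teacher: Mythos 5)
Your proposal is correct and is precisely the argument the paper leaves implicit: the paper omits the proof, citing the standard induction of Atkin \cite[p.~23]{Atkin-1967}, and your write-up fills in exactly that argument, with the base case $\widetilde{L}_0=2\mathcal{E}(q)\,C\Psi_{6,0}(q)$ taken from the identity displayed just before the lemma and the inductive step driven by the two multiplier identities $\widetilde{A}\,\mathcal{E}(q)=q^{-1}\mathcal{O}(q^3)$ and $\widetilde{B}\,\mathcal{O}(q)=q^{-1}\mathcal{E}(q^3)$ (which indeed hold by exact cancellation of eta factors) together with the standard $U_3$ dissection rules. Your derivation of the offsets $\lambda_k=(3^k-1)/2$, i.e.\ $2\lambda_k\equiv-1\pmod{3^k}$, correctly pins down the progression matching Theorem \ref{cpsi-6-0}.
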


Finally, we prove Theorem \ref{cpsi-6-0}.

\begin{proof}
We first observe that
\[
\widetilde{L}_1 = U_3(\widetilde{A}) = U_3(A(\gamma\tau)) = L_1(\gamma\tau).
\]
Assume inductively that for some $\alpha \geq 2$ we have $\widetilde{L}_{\alpha-1} = L_{\alpha-1}(\gamma\tau)$. Without loss of generality, take $\alpha$ even. Then
\begin{align*}
\widetilde{L}_{\alpha} &= U_{\widetilde{B}}(\widetilde{L}_{\alpha-1}) \\
&= U_3(\widetilde{B} \widetilde{L}_{\alpha-1}) \\
&= U_3(B(\gamma\tau) L_{\alpha-1}(\gamma\tau)) \\
&= U_3(B L_{\alpha-1})(\gamma\tau) \\
&= L_{\alpha}(\gamma\tau).
\end{align*}

Hence $\widetilde{L}_{\alpha} = L_{\alpha}(\gamma\tau)$ for every $\alpha \in \mathbb{N}$. Using \eqref{L-ODD}, \eqref{L-EVEN}, \eqref{t}, \eqref{y}, \eqref{p0}, and \eqref{p1}, we obtain that
\begin{align*}
\widetilde{L}_{2\alpha-1} &= \widetilde{p_0} \, \widetilde{y}^{\,3^{2\alpha}-1} \sum_{n \geq -1} d_n^{(2\alpha-1)} \widetilde{t}^{\,n},\\
\widetilde{L}_{2\alpha} &= \widetilde{p_1} \, \widetilde{y}^{\,3^{2\alpha+1}-3} \sum_{n \geq 0} d_n^{(2\alpha)} \widetilde{t}^{\,n},
\end{align*}
for $\alpha \geq 1$. The sequences $L_{\alpha}$ and $\widetilde{L}_{\alpha}$ have the same coefficients, respectively. Therefore, $\widetilde{L}_{\alpha} = L_{\alpha}(\gamma\tau)$ retains the same divisibility properties, and Theorem \ref{cpsi-6-0} follows immediately.
\end{proof}

\begin{remark}
Equation \eqref{y} shows that $2\widetilde{y}$ has a Laurent series expansion in powers of $q$ with integer coefficients. Since $2$ is coprime to $3$, the factor $\widetilde{y}$ does not affect the divisibility by $3$ of $c\psi_{6,0}(n)$. Moreover, $\widetilde{L_0}$, $\widetilde{A}$, and $\widetilde{B}$ all have Laurent series expansions in powers of $q$ with integer coefficients; consequently, $\widetilde{L}_{\alpha}$ also has such an expansion for every $\alpha \in \mathbb{N}$.
\end{remark}

\subsection*{Acknowledgements}
The first author was  supported by the National Key R\&D Program of China (Grant No. 2024YFA1014500) and the National Natural Science Foundation of China (Grant No. 12201387).







\begin{thebibliography}{10}
\bibitem{Andrews-1976}
G.~E. Andrews, {\it The theory of partitions}, Encyclopedia of Mathematics and its Applications, Vol. 2, Addison-Wesley Publishing Co., Reading, Mass.-London-Amsterdam, 1976; \MR{0557013}
\bibitem{Andrews-1984}
G.~E. Andrews, Generalized Frobenius partitions, Mem. Amer. Math. Soc. {\bf 49} (1984), no.~301, {\rm iv}+44 pp.; \MR{0743546}
\bibitem{Atkin-1967}
A.~O.~L. Atkin, Proof of a conjecture of Ramanujan, Glasgow Math. J. {\bf 8} (1967), 14--32; \MR{0205958}
\bibitem{Atkin-Lehner-1970}
A.~O.~L. Atkin and J. Lehner, Hecke operators on $\Gamma \sb{0}(m)$, Math. Ann. {\bf 185} (1970), 134--160; \MR{0268123}
\bibitem{Baruah-Sarmah-2015}
N.~D. Baruah and B.~K. Sarmah, Generalized Frobenius partitions with $6$ colors, Ramanujan J. {\bf 38} (2015), no.~2, 361--382; \MR{3414497}
\bibitem{Chan-Lang-1998}
H.~H. Chan and M.~L. Lang, Ramanujan's modular equations and Atkin-Lehner involutions, Israel J. Math. {\bf 103} (1998), 1--16; \MR{1613532}
\bibitem{Chan-Toh}
H.~H. Chan and P.~C. Toh, New analogues of Ramanujan's partition identities, J. Number Theory {\bf 130} (2010), no.~9, 1898--1913; \MR{2653203}
\bibitem{Chen2-Garvan-arxiv}
D. Chen, R. Chen an F. Garvan, Congruences modulo powers of $5$ and $7$ for the crank and rank parity functions and related mock theta functions, arXiv:2407.07107
\bibitem{Chen-Yin-arxiv}
D. Chen and S. Yin, Congruences modulo powers of $3$ for $6$-colored generalized Frobenius partitions, J. Number Theory (2026), https://doi.org/10.1016/j.jnt.2026.02.010
\bibitem{Chen-Zhu-arxiv}
R. Chen and X. Zhu, Correspondence among congruence families for generalized Frobenius partitions via modular permutations, arXiv:2506.16823
\bibitem{gtutorial}
F.~G. Garvan, A tutorial for the MAPLE ETA package, arXiv:1907.09130;
\bibitem{Garvan-Sellers-Smoot-2024}
F.~G. Garvan, J.~A. Sellers and N.~A. Smoot, Old meets new: connecting two infinite families of congruences modulo powers of $5$ for generalized Frobenius partition functions, Adv. Math. {\bf 454} (2024), Paper No. 109866, 28 pp.; \MR{4781475}
\bibitem{Jiang-Rolen-Woodbury-2022}
Y. Jiang, L. Rolen and M.~C. Woodbury, Generalized Frobenius partitions, Motzkin paths, and Jacobi forms, J. Combin. Theory Ser. A {\bf 190} (2022), Paper No. 105633,28 pp.; \MR{4417270}
\bibitem{Wang-Wang}
K. Wang and Y. Wang, Congruence families modulo powers of $7$  for $4$-colored generalized Frobenius partitions, arXiv:2509.23237










\end{thebibliography}
\end{document}